\newcommand\dd{\,\mbox{d}}
\newcommand\numv[1]{v\left(#1\right)}
\newcommand\nume[1]{e\left(#1\right)}
\newtheorem{theorem}{Theorem}
\newtheorem{lemma}[theorem]{Lemma}
\newtheorem{conjecture}{Conjecture}
\DeclareTextCompositeCommand{\v}{OT1}{l}{l\nobreak\hspace{-.1em}'}
\DeclareTextCompositeCommand{\v}{OT1}{t}{t\nobreak\hspace{-.1em}'\nobreak\hspace{-.15em}}
\begin{document}
\title{Ramsey multiplicity of apices of trees\thanks{The first and third authors were supported by the MUNI Award in Science and Humanities (MUNI/I/1677/2018) of the Grant Agency of Masaryk University. The second author was supported by Slovenian Research and Innovation Agency (P1-0383, J1-4008, J1-3002 and N1-0370).}}

\author{Daniel Kr{\'a}\v{l}\thanks{Institute of Mathematics, Leipzig University, Augustusplatz 10, 04109 Leipzig, and Max Planck Institute for Mathematics in the Sciences, Inselstra{\ss}e 22, 04103 Leipzig, Germany. E-mail: {\tt daniel.kral@uni-leipzig.de}. Previous affiliation: Faculty of Informatics, Masaryk University, Botanick\'a 68A, 602 00 Brno, Czech Republic.}\and
        Matja\v z Krnc\thanks{Faculty of Mathematics, Natural Sciences and Information Technologies, University of Primorska, Glagolja\v ska 8, SI-6000 Koper, Slovenia. E-mail:  {\tt matjaz.krnc@upr.si}.}\and
	Ander Lamaison\thanks{Institute for Basic Science, 55 Expo-ro, Yuseong-gu, 34126 Daejeon, South Korea. E-mail: {\tt ander@ibs.re.kr}. Previous affiliation: Faculty of Informatics, Masaryk University, Botanick\'a 68A, 602 00 Brno, Czech Republic.}
	}

\date{} 
\maketitle

\begin{abstract}
A graph $H$ is common
if its Ramsey multiplicity,
i.e., the minimum number of monochromatic copies of $H$ contained in any $2$-edge-coloring of $K_n$,
is asymptotically the same as the number of monochromatic copies in the random $2$-edge-coloring of $K_n$.
Erd\H os conjectured that every complete graph is common,
which was disproved by Thomason in the 1980s.
Till today, a classification of common graphs remains a widely open challenging problem.
Grzesik, Lee, Lidick\'y and Volec [Combin. Prob. Comput. 31 (2022), 907--923] conjectured that
every $k$-apex of any connected Sidorenko graph is common.
We prove for $k\le 5$ that the $k$-apex of any tree is common.
\end{abstract}

\section{Introduction}
\label{sec:intro}

Ramsey's Theorem~\cite{Ram30} asserts that for every graph $H$,
every $2$-edge-coloring of a sufficiently large complete graph contains a monochromatic copy of $H$.
In this short paper, we are interested in the quantitative version of the statement:
What is the minimum number of monochromatic copies of $H$ in a $2$-edge-coloring of $K_n$?
The \emph{Ramsey multiplicity} of a graph $H$, which is denoted by $M(H,n)$,
is the minimum number of monochromatic (labeled) copies of $H$ in a $2$-edge-coloring of $K_n$, and
the \emph{Ramsey multiplicity constant} of $H$, denoted by $C(H)$,
is the limit
\[C(H)=\lim_{n\to\infty}\frac{M(H,n)}{n^{\numv{H}}}.\]
The random $2$-edge-coloring yields that $C(H)\le 2^{1-\nume{H}}$ for every graph $H$, and
those graphs $H$ such that $C(H)=2^{1-\nume{H}}$ are called \emph{common}.
For example, Goodman's Theorem~\cite{Goo59} implies that $K_3$ is common.
Since every graph that has the Sidorenko property is common,
the famous conjecture of Sidorenko~\cite{Sid93} and of Erd\H{o}s and Simonovits~\cite{ErdS83},
which asserts that every bipartite graph has the Sidorenko property (the definition is given in Section~\ref{sec:notation}),
would imply, if true, that every bipartite graph is common.

The notion of common graphs can be traced to the 1960s
when Erd\H{o}s~\cite{Erd62} conjectured that every complete graph is common.
Later, Burr and Rosta~\cite{BurR80} extended the conjecture to all graphs.
Both conjectures turned out to be false:
Sidorenko~\cite{Sid86,Sid89} showed that a triangle with a pendant edge is not common and
Thomason~\cite{Tho89} showed that $K_4$ is not common.
More generally, any graph containing $K_4$ is not common~\cite{JagST96}, and
any non-bipartite graph can be made to be not common by adding a sufficiently large tree~\cite{Fox08}.

On the positive side, every graph with the Sidorenko property is common.
In particular, families of bipartite graphs known
to have the Sidorenko property~\cite{BlaR65, Sid89, Sid91, ConFS10, ConL17, ConKLL18, ConL21}
provide examples of bipartite graphs that are common.
Examples of non-bipartite common graphs include odd cycles~\cite{Sid89} and even wheels~\cite{JagST96,Sid96}, and
more generally graphs that can be recursively obtained by gluing copies of $K_3$ by a vertex or an edge~\cite{GrzLLV22}.
The existence of a common graph with chromatic number at least four was open until 2012
when the flag algebra method of Razborov~\cite{Raz07} resulted in a proof that
the $5$-wheel is common~\cite{HatHKNR12}.
The existence of common graphs with arbitrarily large chromatic number was established only recently~\cite{KraVW25};
also see~\cite{KL23} for the construction of such highly connected graphs derived from those presented in~\cite{KraVW25}.

A characterization of common graphs remains an intriguing open problem, which is very far from our reach.
Our work is motivated by the following conjecture of Grzesik, Lee, Lidick\'y and Volec~\cite{GrzLLV22}.
Recall that the \emph{$k$-apex} of a graph $G$ is the graph obtained from $G$ by adding $k$ vertices and
making each of the new $k$ vertices adjacent to every vertex of $G$ (however, the $k$ new vertices
remain to form an independent set).

\begin{conjecture}[{Grzesik, Lee, Lidick\'y and Volec~\cite[Conjecture~1.3]{GrzLLV22}}]
\label{conj:main}
If $G$ is a connected graph that has the Sidorenko property,
then the $k$-apex of $G$ is common for every $k$.
\end{conjecture}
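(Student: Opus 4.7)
The plan is to pass to the graphon framework: writing $W\colon [0,1]^2 \to [0,1]$ for the kernel of one color and $\overline W = 1 - W$ for the other, the conjecture reduces to the functional inequality
\[
t(H, W) + t(H, \overline W) \ge 2^{1-\nume{H}}, \qquad H = G^{(k)},
\]
for every graphon $W$, where $G^{(k)}$ denotes the $k$-apex of $G$. The structural feature to exploit is that the $k$ apex vertices of $H$ form an independent set, each adjacent to all of $V(G)$, so integrating them out yields
\[
t(H, W) = \int_{[0,1]^{V(G)}} \left(\prod_{uv \in E(G)} W(x_u, x_v)\right) F_W(\vec x)^k \dd\vec x, \quad F_W(\vec x) = \int_0^1 \prod_{v \in V(G)} W(x_v, y) \dd y,
\]
with the analogous expression for $\overline W$. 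This factorization is the starting point.

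Given it, the first move is Jensen's inequality in the apex variables: viewing $\prod_{uv \in E(G)} W(x_u, x_v)/t(G,W)$ as a probability measure on $[0,1]^{V(G)}$ and using convexity of $z \mapsto z^k$, one obtains
\[
t(H, W) \ge \frac{t(G^{(1)}, W)^k}{t(G, W)^{k-1}}.
\]
Summing with the analogous bound for $\overline W$ and applying the Cauchy--Schwarz-type inequality $a^2/b + c^2/d \ge (a+c)^2/(b+d)$ (iterated for $k > 2$), one aims to control $t(G^{(1)},W)+t(G^{(1)},\overline W)$ against $t(G,W)+t(G,\overline W)$. The Sidorenko property of $G$ gives $t(G,W) \ge t(K_2,W)^{\nume{G}}$, and when $G = T$ is a tree the graph $T^{(1)}$ is built by edge-gluing triangles along the apex vertex, so its commonness follows from the recursive triangle-gluing theorem of~\cite{GrzLLV22}. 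This would furnish the base of an induction on $k$.

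A cleaner variant is to expand $W = \tfrac12 + g$ with $g \in [-\tfrac12,\tfrac12]$: odd-degree terms cancel in $t(H, W) + t(H, \overline W)$, so commonness is equivalent to
\[
\sum_{\substack{S \subseteq E(H),\ |S|\ \text{even}\\ |S| \ge 2}} 2^{|S|}\, t_S(g) \ge 0, \qquad t_S(g) = \int \prod_{e \in S} g(x_e) \dd\vec x.
\]
Each admissible $S$ decomposes as $S_G \cup B$, where $S_G \subseteq E(G)$ and $B$ is a bipartite graph between $V(G)$ and the $k$ apex vertices; pairing terms with the same $S_G$ and exploiting the $S_k$-symmetry permuting the apex vertices should expose a sum-of-squares decomposition certifying the inequality, with the tree structure of $G$ providing the necessary recursive control over $t_{S_G}$.

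The main obstacle, and presumably the reason the paper restricts to $k \le 5$ with $G$ a tree, is the combinatorial explosion of such a certificate as $k$ grows: the bipartite patterns $B$ between $V(G)$ and the apex set multiply rapidly with $k$, and while the rigid tree recursion keeps the relevant subgraph densities tractable for small $k$, producing a decomposition uniform in $\numv{G}$ for every $k$ appears to need genuinely new input. Bridging this combinatorial gap, rather than the analytic inequalities themselves, is the hard part.
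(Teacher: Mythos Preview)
The statement you are trying to prove is Conjecture~\ref{conj:main}, which the paper does \emph{not} prove; it remains open. The paper establishes only the special case of trees with $k\le 5$ (Theorem~\ref{thm:main}, via Theorem~\ref{thm:main2}). So your proposal should be read as an attempted strategy toward that partial result, and compared against the paper's actual argument there.

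Your reduction and the paper's are in a sense transposes of one another. You fix $G$ and use Jensen over the apex vertices to obtain
\[
m(G^{+k},W)\;\ge\;\frac{m(G^{+1},W)^{k}}{m(G,W)^{k-1}},
\]
which would then require the ratio inequality $m(G^{+1},W)/m(G,W)\ge 2^{-\numv{G}}$ together with the commonness of $G^{+1}$. The paper instead fixes the $k$ apex vertices, applies the Sidorenko property of $G$ inside their monochromatic common neighborhood to get \eqref{eq:Sidorenko}, and then uses H\"older over $B_k$ to obtain
\[
m(G^{+k},W)\;\ge\;\frac{m(T_k,W)^{n-1}}{m(S_k,W)^{n-2}},
\]
which reduces everything to the single inequality $m(T_k,W)/m(S_k,W)\ge 2^{-(k+1)}$ of Lemma~\ref{lm:main}, uniform in the tree. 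The paper proves this lemma for $k\le 5$ by hand using Goodman's formula and the bound $m(T_k,W)\ge m(T_1,W)^k$.

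There is a genuine gap in your plan, and it is not merely the ``combinatorial explosion'' you describe. Specialize your ratio inequality to the star $G=S_m$: since $S_m^{+1}=T_m$, your approach needs $m(T_m,W)/m(S_m,W)\ge 2^{-(m+1)}$ --- exactly Lemma~\ref{lm:main} with parameter $m$ in place of $k$. The paper exhibits, in Section~\ref{sec:concl}, an explicit graphon $W_{1/20}$ for which this ratio is strictly below $2^{-13}$ when $m=12$. Hence your Jensen route cannot establish that $(S_{12})^{+k}$ is common for all $k$; the needed inequality is simply false. So the obstruction is analytic, not bookkeeping: the ratio bound that both approaches ultimately rest on provably fails for large enough parameter, and neither your Jensen reduction nor the paper's H\"older reduction can reach the full conjecture without a genuinely new idea. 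Your sum-of-squares sketch is too vague to assess; nothing in it addresses why the certificate should exist when the natural first-moment inequalities do not.
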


Recall that the above mentioned conjecture of Sidorenko~\cite{Sid93} and of Erd\H{o}s and Simonovits~\cite{ErdS83}
asserts that every bipartite graph has the Sidorenko property,
i.e., if both conjectures were true, then every $k$-apex of every bipartite graph would be common.
The assumption in Conjecture~\ref{conj:main} that the graph $G$ is connected is necessary:
the $1$-apex of the union of $K_1$ and $K_2$ is a triangle with a pendant edge,
which is not common~\cite{Sid86,Sid89}.

Conjecture~\ref{conj:main} is known for a small number of special classes of graphs:
$1$-apices of even cycles, which are even wheels~\cite{JagST96,Sid96},
$1$-apices of trees, which are graphs obtained by gluing triangles on edges~\cite{JagST96,Sid96},
$2$-apices of even cycles~\cite{GrzLLV22}, and
$k$-apices of connected bipartite graphs with at most five vertices~\cite{GrzLLV22}.
Our main result is a proof of the conjecture for $k$-apices of trees when $k\le 5$.

\begin{theorem}
\label{thm:main}
Let $T$ be a tree. The $k$-apex of $T$ for $k\in\{1,2,3,4,5\}$ is common.
\end{theorem}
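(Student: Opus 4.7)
The plan is to pass to the graphon setting, separate the apex variables from the tree variables, and use the Sidorenko property of trees to reduce the claim to an inequality whose dependence on $T$ is only through $n := |V(T)|$. Writing $H = H_k(T)$, the theorem is equivalent to
\[
t(H, W) + t(H, 1-W) \;\ge\; 2^{1-e(H)} \qquad \text{for every graphon } W:[0,1]^2 \to [0,1],
\]
with $e(H) = (n-1) + kn$.

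I would first rewrite $t(H, W)$ by integrating over the apex coordinates $\mathbf{y} = (y_1, \ldots, y_k) \in [0,1]^k$ separately. Setting $\rho^W_{\mathbf{y}}(x) := \prod_{i=1}^k W(y_i, x)$,
\[
t(H,W) \;=\; \int_{[0,1]^k}\! \int_{[0,1]^{V(T)}} \prod_{uv \in E(T)} W(x_u, x_v) \prod_{v \in V(T)} \rho^W_{\mathbf{y}}(x_v)\, d\mathbf{x}\, d\mathbf{y},
\]
and analogously for $1-W$. For each fixed $\mathbf{y}$, the inner integral is a homomorphism count of $T$ in $W$ with a uniform vertex weight $\rho^W_{\mathbf{y}}$. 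Replacing the Lebesgue measure on $[0,1]$ by the probability measure with density $\rho^W_{\mathbf{y}}/\mu_W(\mathbf{y})$, where $\mu_W(\mathbf{y}) := \int \rho^W_{\mathbf{y}}$, and applying the classical Sidorenko property of trees on the new measure space yields the weighted bound
\[
\int_{[0,1]^{V(T)}} \prod_{uv \in E(T)} W(x_u, x_v) \prod_v \rho^W_{\mathbf{y}}(x_v)\, d\mathbf{x} \;\ge\; \frac{\nu_W(\mathbf{y})^{n-1}}{\mu_W(\mathbf{y})^{n-2}},
\]
where $\nu_W(\mathbf{y}) := \iint W(x_1, x_2)\, \rho^W_{\mathbf{y}}(x_1)\, \rho^W_{\mathbf{y}}(x_2)\, dx_1\, dx_2$. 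Crucially, the right-hand side depends on $T$ only through $n$.

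Integrating over $\mathbf{y}$ and adding the analogous inequality for $1 - W$, the problem reduces to an inequality that involves $W$ only through the apex structure: the identities $\int \mu_W(\mathbf{y})\, d\mathbf{y} = t(K_{1,k}, W)$ and $\int \nu_W(\mathbf{y})\, d\mathbf{y} = t(B_k, W)$ hold, where $K_{1,k}$ is a tree (hence Sidorenko, hence common) and the book $B_k$ is the $1$-apex of the star $K_{1,k}$ (hence common by \cite{JagST96,Sid96}). The plan for closing the argument is to combine (a) Jensen's inequality in $\mathbf{y}$, exploiting the joint convexity of $(\mu,\nu) \mapsto \nu^{n-1}/\mu^{n-2}$ for $n \ge 2$; (b) the power-mean inequality $a^p/s^{p-1} + b^p/t^{p-1} \ge (a+b)^p/(s+t)^{p-1}$ for $p \ge 1$ to merge the red and blue contributions; and (c) sharper pointwise-in-$\mathbf{y}$ bounds relating $\nu_W$, $\mu_W$, $\nu_{\bar W}$ and $\mu_{\bar W}$.

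The main obstacle is part (c). The \emph{global} commonness inequalities for $K_{1,k}$ and $B_k$ alone are insufficient, since the power-mean step is only tight at the constant graphon $W \equiv 1/2$; away from that point, the excess of $t(K_{1,k}, W) + t(K_{1,k}, 1-W)$ over its minimum $2^{1-k}$ can spoil the bound. Overcoming this requires \emph{joint} pointwise estimates for $\mu_W(\mathbf{y})$ and $\nu_W(\mathbf{y})$ (and their blue counterparts), reducing the inequality to a polynomial one in a bounded number of moments of $W$. For $k \le 5$ this polynomial inequality is expected to be verifiable by a careful (possibly computer-assisted) case analysis, but the number of cases and the degree of the polynomial grow rapidly with $k$, which is precisely why the method stops at $k = 5$.
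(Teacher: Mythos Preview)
Your reduction is essentially the paper's: separating apex from tree variables, applying Sidorenko for the tree, and then using the joint convexity of $(\mu,\nu)\mapsto \nu^{n-1}/\mu^{n-2}$ (your steps (a)+(b)) is exactly the paper's H\"older step \eqref{eq:Holder}, and both arrive at
\[
m(G^{+k},W)\;\ge\;\frac{m(T_k,W)^{n-1}}{m(S_k,W)^{n-2}},
\]
where $T_k$ is your book $B_k$ and $S_k$ your star $K_{1,k}$. You also correctly diagnose that the separate commonness bounds $m(T_k,W)\ge 2^{-2k}$ and $m(S_k,W)\ge 2^{1-k}$ are not enough, because $m(S_k,W)$ sits in the denominator.

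The gap is in your step~(c). The paper does \emph{not} go back to pointwise-in-$\mathbf{y}$ estimates or to a computer-assisted polynomial verification. The missing idea is a single global ratio inequality (Lemma~\ref{lm:main}):
\[
\frac{m(T_k,W)}{m(S_k,W)}\;\ge\;2^{-(k+1)}\qquad\text{for }k\le 5,
\]
which, combined with $m(T_k,W)\ge 2^{-2k}$, finishes the proof immediately via
\[
\frac{m(T_k,W)^{n-1}}{m(S_k,W)^{n-2}}=m(T_k,W)\left(\frac{m(T_k,W)}{m(S_k,W)}\right)^{n-2}\ge 2^{-2k}\cdot 2^{-(k+1)(n-2)}=2^{2-(k+1)n}.
\]
The proof of this ratio bound is short and elementary: write $h(x)=\int W(x,y)\,\mathrm{d}y-\tfrac12$, use Goodman's formula to get $m(T_1,W)=\tfrac14+3\int h^2$, use the book inequality $m(T_k,W)\ge m(T_1,W)^k$, and expand $m(S_k,W)=\int\bigl((\tfrac12+h)^k+(\tfrac12-h)^k\bigr)$. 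For $k\le 5$ the resulting one-variable inequality in $\int h^2$ (and $\int h^4\le\tfrac14\int h^2$ for $k=4,5$) is checked by hand. No pointwise-in-$\mathbf{y}$ analysis and no computer assistance are needed; indeed the paper emphasizes that the argument is not computer assisted, and shows that the ratio inequality genuinely fails for $k\ge 12$, which is the actual reason the method stops.
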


Note that Theorem~\ref{thm:main} is the first example of an infinite class of bipartite graphs such that
$k$-apices of the graphs from the class are common for $k\ge 3$.
We also remark that, unlike some results in the area, the proof of Theorem~\ref{thm:main} is not computer assisted.

This paper is structured as follows.
After reviewing notation in Section~\ref{sec:notation},
we prove Theorem~\ref{thm:main} in Section~\ref{sec:main}.
Interestingly,
we prove a stronger statement that
concerns all $n$-vertex graphs with $n-1$ edges that have the Sidorenko property.
In Section~\ref{sec:concl},
we discuss the possibility of extending our arguments to $k$-apices of trees for $k>5$ and
present an example that shows that the inequality claimed in Lemma~\ref{lm:main},
which is essential in our argument, fails for $k\ge 12$.

\section{Notation}
\label{sec:notation}

We now review notation used throughout the paper.
All graphs considered in this paper are finite and simple.
If $G$ is a graph,
then $V(G)$ denotes the vertex set of $G$ and $E(G)$ denotes the edge set of $G$.
The number of vertices of a graph $G$ is denoted by $\numv{G}$ and the number of edges by $\nume{G}$.
Our arguments often involve the following two classes of graphs and
so we introduce particular notation for them (see Figure~\ref{fig:SkTk} for illustration):
$S_k$ denotes a \emph{star} with $k$ leaves, and
$T_k$ denotes a \emph{triangular book} with $k$ triangles (the complete tripartite $K_{1,1,k}$).
Finally, the $k$-apex of a graph $G$, which is denoted by $G^{+k}$,
is the graph obtained from $G$ by adding $k$ new vertices and
making each of the new $k$ vertices adjacent to every vertex of $G$ (the $k$ newly added vertices remain to form an independent set).
In particular, $S_k$ is the $k$-apex of $K_1$ and $T_k$ is the $k$-apex of $K_2$.

\begin{figure}
\begin{center}
\epsfbox{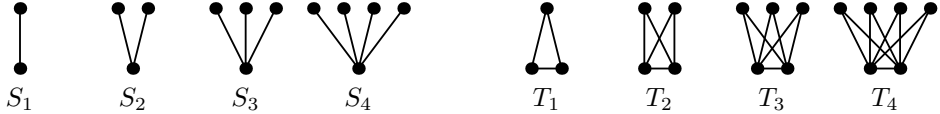}
\end{center}
\caption{The graphs $S_k$ and $T_k$ for $k\in\{1,2,3,4\}$.}
\label{fig:SkTk}
\end{figure}

We treat the notion of common graphs using tools from the theory of graph limits;
we refer to the monograph by Lov\'asz~\cite{Lov12} for a comprehensive treatment of graph limits.
In the theory of graph limits, large graphs are represented by an analytic object called graphon.
A~\emph{graphon} is a symmetric measurable function $W:[0,1]^2\to [0,1]$,
i.e., $W(x,y)=W(y,x)$ for all $x,y\in [0,1]$.
Since a graphon can be thought of as a continuous version of the adjacency matrix of a graph (although
a more precise view is as the limit of regularity partitions that approximate large graphs),
we will refer to the elements of the domain $[0,1]$ as vertices of $W$.
The \emph{homomorphism density} of a graph $G$ in a graphon $W$ is defined as
\[
t(G,W) = \int\limits_{[0,1]^{V(G)}} \prod\limits_{uv\in E(G)} W(x_u,x_v) \dd x_{V(G)}.
\]
We say that a graph $G$ has the \emph{Sidorenko property}
if $t(G,W)\ge t(K_2,W)^{e(G)}$ for every graphon $W$.
Recall that
the aforementioned conjecture of Sidorenko~\cite{Sid93} and of Erd\H{o}s and Simonovits~\cite{ErdS83}
asserts that
every bipartite graph has the Sidorenko property.
It is interesting to note that a graph $G$ has the Sidorenko property if and only if
$G$ is $k$-common for every $k$~\cite{KraNNVW22},
i.e., the random $k$-edge-coloring of a complete graph asymptotically minimizes the number of monochromatic copies of $G$
among all $k$-edge-colorings of the host complete graph.

We will often think of a graphon $W$ as representing one color class of edges
while the complementary color class is represented by the graphon $1-W$,
which is defined as $(1-W)(x,y)=1-W(x,y)$.
With this view in mind, we define the \emph{monochromatic density} of a graph $G$ in a graphon $W$ as
\[
m(G,W)=t(G,W)+t(G,1-W).
\]
The definition of being common can now be cast as follows:
a graph $G$ is common if and only if $m(G,W)\ge 2^{1-\nume{G}}$ for every graphon $W$.

In our arguments, we often consider monochromatic common neighborhoods of $k$ vertices of a graphon.
To make our presentation easier,
we set $D_k$ as $\{0,1\}\times [0,1]^k$ and
we think of the first coordinate as representing the color class and
the remaining $k$ coordinates as the $k$-tuple of vertices of a graphon.
We think of $D_k$ as equipped with a measure $\mu$ defined as
\[\mu\left(\left(\{0\}\times X\right)\cup\left(\{1\}\times Y\right)\right)=\lambda(X)+\lambda(Y)\]
for $X,Y\subseteq [0,1]^k$ where $\lambda$ is the uniform measure on $[0,1]^k$.
Using the above mentioned interpretation of the elements of $D_k$,
we define the function $d^W_k:D_k\to [0,1]$ that
represents the monochromatic common degree (the size of monochromatic common neighborhood) of a $k$-tuple of vertices as follows:
\[
d^W_k(s,x_1,\ldots,x_k)=
  \begin{cases}
  \int\limits_{[0,1]}\prod\limits_{i=1}^k W(x_i,y)\dd y
  & \text{if }s=0 \\
  \int\limits_{[0,1]}\prod\limits_{i=1}^k (1-W(x_i,y))\dd y
  & \text{if }s=1.
  \end{cases}
\]
Observe that
\[m(S_k,W)=\int\limits_{D_k}d^W_k(z)\dd z\]
for every $k$ (recall that $D_k=\{0,1\}\times [0,1]^k$).
Similarly,
we define the function $\delta^W_k:D_k\to [0,1]$ that
represents the edge density of the monochromatic common neighborhood of a $k$-tuple of vertices as follows (informally speaking,
the numerator counts the number of pairs of common neighbors that are adjacent and
the denominator counts the number of pairs of common neighbors):
\[
\delta^W_k(s,x_1,\ldots,x_k)=
\begin{cases}
\frac{\int\limits_{[0,1]^2} W(y,y') \prod\limits_{i=1}^k W(x_i,y)W(x_i,y')\dd y\dd y'}{d^W_k(0,x_1,\ldots,x_k)^2}
& \text{if }s=0 \\
&\\
\frac{\int\limits_{[0,1]^2} (1-W(y,y')) \prod\limits_{i=1}^k (1-W(x_i,y))(1-W(x_i,y'))\dd y\dd y'}{d^W_k(1,x_1,\ldots,x_k)^2}
& \text{if }s=1.
\end{cases}
\]
Observe that
\[m(T_k,W)=\int\limits_{D_k}d^W_k(z)^2\delta^W_k(z)\dd z,\]
and more generally,
if a graph $G$ has the Sidorenko property,
it holds that
\begin{equation}
m(G^{+k},W)\ge\int\limits_{D_k}d^W_k(z)^{\numv{G}}\delta^W_k(z)^{\nume{G}}\dd z
\label{eq:Sidorenko0}
\end{equation}
for every $k$.

\section{Main result}
\label{sec:main}

In this section, we present the proof of our main result.
The following lemma is the core of our argument.
In Section~\ref{sec:concl},
we present an example that the statement of the lemma fails for $k\ge 12$.

\begin{lemma}
\label{lm:main}
For every $k\in\{1,\ldots,5\}$ and every graphon $W$, it holds that
\begin{equation}
\frac{m(T_k,W)}{m(S_k,W)}\ge 2^{-(k+1)}.\label{eq:TS}
\end{equation}
\end{lemma}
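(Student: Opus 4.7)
The plan is to prove~(\ref{eq:TS}) by combining Jensen's inequality, applied inside each colour class, with Goodman's theorem, and then verifying a one-parameter polynomial inequality whose threshold turns out to be exactly $k=5$.

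I would first unfold both sides in the two-colour notation. Setting $W_0=W$, $W_1=1-W$, $d_c(y)=\int W_c(x,y)\dd x$, $q_c(y,y')=\int W_c(x,y)W_c(x,y')\dd x$, and $\alpha_c=\int W_c$, a direct calculation yields
\begin{equation*}
m(S_k,W)=\sum_{c\in\{0,1\}}\int d_c(y)^k\dd y, \qquad m(T_k,W)=\sum_{c\in\{0,1\}}\int W_c(y,y')\,q_c(y,y')^k\dd y\dd y'.
\end{equation*}
Applying Jensen's inequality to $t\mapsto t^k$ against the probability measure $W_c\dd y\dd y'/\alpha_c$ on $[0,1]^2$ yields $\int W_c q_c^k\dd y\dd y'\ge t(K_3,W_c)^k/\alpha_c^{k-1}$ for each $c$. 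Summing over $c$ and invoking the power-mean inequality $\sum_c a_c^k/b_c^{k-1}\ge(\sum_c a_c)^k/(\sum_c b_c)^{k-1}$ with $\alpha_0+\alpha_1=1$ produces $m(T_k,W)\ge m(K_3,W)^k$.

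Next, I would use the exact Goodman identity $m(K_3,W)=1-3\alpha_0+3\int d_0(y)^2\dd y$, obtained by expanding $\int (W_0 q_0+W_1 q_1)\dd y\dd y'$ and substituting $q_1-q_0=1-d_0(y)-d_0(y')$. Together with the explicit $m(S_k,W)=\int(d_0(y)^k+(1-d_0(y))^k)\dd y$, inequality~(\ref{eq:TS}) reduces to the stronger claim $m(K_3,W)^k\ge 2^{-(k+1)}m(S_k,W)$. By the $W\leftrightarrow 1-W$ symmetry the worst case is at $\alpha_0=1/2$, and a Carath\'eodory-style argument using convexity of $d\mapsto d^k+(1-d)^k$ forces the worst-case distribution of $d_0(y)$ to concentrate on $\{0,1/2,1\}$ with weights $((1-\bar p)/2,\bar p,(1-\bar p)/2)$ for some $\bar p\in[0,1]$. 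In that case $m(K_3,W)=1-3\bar p/4$ and $m(S_k,W)=(1-\bar p)+2^{1-k}\bar p$, so the inequality becomes
\begin{equation*}
(1-3\bar p/4)^k\ge 2^{-(k+1)}(1-\bar p)+2^{-2k}\bar p \qquad \text{for all } \bar p\in[0,1].
\end{equation*}

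The main obstacle, and the source of the threshold $k\le 5$, is the analysis of this one-parameter polynomial inequality. Writing $f(\bar p)=(1-3\bar p/4)^k-2^{-(k+1)}(1-\bar p)-2^{-2k}\bar p$, one checks $f(0)=1-2^{-(k+1)}>0$, $f(1)=0$, and $f''\ge 0$ on $[0,1]$. Convexity of $f$ combined with $f(1)=0$ then forces $f\ge 0$ on $[0,1]$ exactly when the tangent to $f$ at $\bar p=1$ is non-positive, i.e.\ when $f'(1)\le 0$; a short calculation gives $f'(1)=2^{-(k+1)}-(3k+1)2^{-2k}$, so the condition becomes $2^{k-1}\le 3k+1$, which is valid for $k\in\{1,\ldots,5\}$ (with equality at $k=5$) and fails first at $k=6$. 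Thus the Jensen--Goodman approach breaks down exactly at $k=6$, matching the range of the lemma and consistent with the ultimate failure of~(\ref{eq:TS}) for $k\ge 12$ exhibited in Section~\ref{sec:concl}.
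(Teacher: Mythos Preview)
Your argument is correct and follows the same skeleton as the paper's proof: both first establish $m(T_k,W)\ge m(T_1,W)^k$ (you derive it via Jensen within each colour plus the Radon-type inequality $\sum a_c^k/b_c^{k-1}\ge(\sum a_c)^k/(\sum b_c)^{k-1}$; the paper simply cites it as a standard H\"older-type bound), then use Goodman's identity to write $m(T_1,W)=\tfrac14+3\int h^2$ with $h=d_0-\tfrac12$, and finally verify the resulting elementary inequality against $m(S_k,W)=\int\bigl((\tfrac12+h)^k+(\tfrac12-h)^k\bigr)$. The only real difference is in this last step: the paper linearises the numerator via $(\tfrac14+3s)^k\ge 2^{-2k}+3k\cdot 2^{2-2k}s$ and bounds the denominator using $\int h^4\le\tfrac14\int h^2$, checking each $k$ by hand, whereas you reduce to the extremal two-point distribution of $|h|$ and extract the clean threshold $2^{k-1}\le 3k+1$, which is arguably tidier and makes the cutoff at $k=5$ transparent.

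Two minor imprecisions worth tightening. First, ``by the $W\leftrightarrow 1-W$ symmetry the worst case is at $\alpha_0=1/2$'' is not a consequence of symmetry alone; the actual reason is that both $m(T_1,W)=\tfrac14+3\int h^2$ and $m(S_k,W)$ depend only on the distribution of $|h|$, so $\alpha_0$ is simply irrelevant. Second, the reduction to support $\{0,\tfrac12,1\}$ does not follow from convexity of $d\mapsto d^k+(1-d)^k$ in $d$ together with a generic Carath\'eodory bound on the number of atoms; what you need is that $\phi(t)=(\tfrac12+t)^k+(\tfrac12-t)^k$ is convex \emph{as a function of $t^2$}, so that for fixed $\int h^2$ the integral $\int\phi(|h|)$ is maximised when $h^2\in\{0,\tfrac14\}$. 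Neither point affects the correctness of the proof.
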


\begin{proof}
Fix a graphon $W$.
We define a function $h:[0,1]\to [-1/2,+1/2]$,
which measures how much the degree of a particular vertex $x$ of $W$ differs from $1/2$, as follows:
\[h(x)=\int\limits_{[0,1]}W(x,y)\dd y-\frac{1}{2}.\]
Observe that
\begin{equation}
m(S_k,W)=\int\limits_{[0,1]}\left(\frac{1}{2}+h(x)\right)^k+\left(\frac{1}{2}-h(x)\right)^k\dd x\label{eq:Sk}
\end{equation}
for every positive integer $k$.
Goodman's formula~\cite{Goo59} asserts that
\[m(T_1,W)=\frac{3}{2}m(S_2,W)-\frac{1}{2},\]
which yields that
\[m(T_1,W)=\frac{3}{2}\int\limits_{[0,1]}\left(\frac{1}{2}+h(x)\right)^2+\left(\frac{1}{2}-h(x)\right)^2\dd x-\frac{1}{2}
          =\frac{1}{4}+3\int\limits_{[0,1]}h(x)^2\dd x.\]
The well-known inequality $m(T_k,W)\ge m(T_1,W)^k$,
which follows from a standard H\"older type argument (the inequality is also a particular case of~\cite[Lemma 2.2]{GrzLLV22}),
implies that
\begin{equation}
m(T_k,W)\ge\left(\frac{1}{4}+3\int\limits_{[0,1]}h(x)^2\dd x\right)^k
        \ge\frac{1}{2^{2k}}+\frac{3k}{2^{2k-2}}\int\limits_{[0,1]}h(x)^2\dd x
\label{eq:Tk}
\end{equation}
for every positive integer $k$.
Note that \eqref{eq:Tk} implies that $m(T_k,W)\ge 2^{-2k}$, i.e., $T_k$ is common.

If $k=1$, we derive the desired estimate from \eqref{eq:Sk} and \eqref{eq:Tk} as follows:
\[\frac{m(T_1,W)}{m(S_1,W)}\ge
  \frac{\frac{1}{4}+3\int\limits_{[0,1]}h(x)^2\dd x}
       {\int\limits_{[0,1]}\left(\frac{1}{2}+h(x)\right)+\left(\frac{1}{2}-h(x)\right)\dd x}=
  \frac{\frac{1}{4}+3\int\limits_{[0,1]}h(x)^2\dd x}{1}\ge\frac{1}{4}.\]
If $k=2$, we again derive the desired estimate from \eqref{eq:Sk} and \eqref{eq:Tk}:
\[\frac{m(T_2,W)}{m(S_2,W)}\ge
  \frac{\frac{1}{16}+\frac{6}{4}\int\limits_{[0,1]}h(x)^2\dd x}
       {\int\limits_{[0,1]}\left(\frac{1}{2}+h(x)\right)^2+\left(\frac{1}{2}-h(x)\right)^2\dd x}=
  \frac{\frac{1}{16}+\frac{24}{16}\int\limits_{[0,1]}h(x)^2\dd x}
       {\frac{1}{2}+\frac{4}{2}\int\limits_{[0,1]}h(x)^2\dd x}\ge\frac{1}{8}.\]
And if $k=3$, we derive the desired estimate from \eqref{eq:Sk} and \eqref{eq:Tk} as follows:
\[\frac{m(T_3,W)}{m(S_3,W)}\ge
  \frac{\frac{1}{64}+\frac{9}{16}\int\limits_{[0,1]}h(x)^2\dd x}
       {\int\limits_{[0,1]}\left(\frac{1}{2}+h(x)\right)^3+\left(\frac{1}{2}-h(x)\right)^3\dd x}=
  \frac{\frac{1}{64}+\frac{36}{64}\int\limits_{[0,1]}h(x)^2\dd x}
       {\frac{1}{4}+\frac{12}{4}\int\limits_{[0,1]}h(x)^2\dd x}\ge\frac{1}{16}.\]
For $k\in\{4,5\}$, we employ the following estimate,
which holds as $\lvert h(x)\rvert\le 1/2$:
\begin{equation}
\int\limits_{[0,1]}h(x)^4\dd x \le \frac{1}{4}\int\limits_{[0,1]}h(x)^2\dd x\label{eq:h4}
\end{equation}
If $k=4$, we combine \eqref{eq:Sk}, \eqref{eq:Tk} and \eqref{eq:h4} as follows:
\[\frac{m(T_4,W)}{m(S_4,W)}\ge
  \frac{\frac{1}{256}+\frac{12}{64}\int\limits_{[0,1]}h(x)^2\dd x}
       {\int\limits_{[0,1]}\left(\frac{1}{2}+h(x)\right)^4+\left(\frac{1}{2}-h(x)\right)^4\dd x}\ge
  \frac{\frac{1}{256}+\frac{48}{256}\int\limits_{[0,1]}h(x)^2\dd x}
       {\frac{1}{8}+\frac{24+4}{8}\int\limits_{[0,1]}h(x)^2\dd x}\ge\frac{1}{32}.\]
Finally,
if $k=5$, we obtain using \eqref{eq:Sk}, \eqref{eq:Tk} and \eqref{eq:h4} the following:
\[\frac{m(T_5,W)}{m(S_5,W)}\ge
  \frac{\frac{1}{1024}+\frac{15}{256}\int\limits_{[0,1]}h(x)^2\dd x}
       {\int\limits_{[0,1]}\left(\frac{1}{2}+h(x)\right)^5+\left(\frac{1}{2}-h(x)\right)^5\dd x}\ge
  \frac{\frac{1}{1024}+\frac{60}{1024}\int\limits_{[0,1]}h(x)^2\dd x}
       {\frac{1}{16}+\frac{40+20}{16}\int\limits_{[0,1]}h(x)^2\dd x}=\frac{1}{64}.\]
The proof of the lemma is now completed.
\end{proof}

We are now ready to prove our main result, which is implied by the following more general theorem.

\begin{theorem}
\label{thm:main2}
Let $G$ be an $n$-vertex graph with $n-1$ edges that has the Sidorenko property.
The $k$-apex $G^{+k}$ of $G$ for $k\in\{1,2,3,4,5\}$ is common.
\end{theorem}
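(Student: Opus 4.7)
The plan is to chain three inequalities: the Sidorenko property of $G$ in the form of \eqref{eq:Sidorenko0}, a Jensen step that extracts the ratio $m(T_k,W)/m(S_k,W)$, and Lemma~\ref{lm:main}. Since $G$ has the Sidorenko property with $\numv{G}=n$ and $\nume{G}=n-1$, the inequality~\eqref{eq:Sidorenko0} supplies the starting point
\[
m(G^{+k},W)\ge \int\limits_{B_k} d^W_k(z)^{n}\,\delta^W_k(z)^{n-1}\dd z,
\]
and the task is to rewrite the right-hand side so that Lemma~\ref{lm:main} becomes applicable.

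To do so, I would treat $\dd\mu:=d^W_k(z)\dd z/m(S_k,W)$ as a probability measure on $B_k$ (the degenerate case $m(S_k,W)=0$ is trivial) and apply Jensen's inequality to the convex function $x\mapsto x^{n-1}$ and the random variable $d^W_k\delta^W_k$. The definitions of $m(S_k,W)$ and $m(T_k,W)$ give $\int_{B_k} d^W_k\delta^W_k\dd\mu=m(T_k,W)/m(S_k,W)$, so this step produces
\[
m(G^{+k},W)\ge\frac{m(T_k,W)^{n-1}}{m(S_k,W)^{n-2}}.
\]
The hypothesis $\numv{G}-\nume{G}=1$ is essential here: it is exactly what makes the exponent of $d^W_k$ exceed that of $\delta^W_k$ by one, so that a single power of $d^W_k$ is free to play the role of the density of $\dd\mu$.

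It then remains to assemble the numeric bounds. Lemma~\ref{lm:main} replaces $m(T_k,W)^{n-1}$ with $2^{-(k+1)(n-1)}m(S_k,W)^{n-1}$, and applying the convexity of $t\mapsto t^k$ pointwise to $1/2\pm h(x)$ in \eqref{eq:Sk} yields the star bound $m(S_k,W)\ge 2^{1-k}$ for every graphon $W$. Multiplying gives
\[
m(G^{+k},W)\ge 2^{-(k+1)(n-1)}\cdot 2^{1-k}=2^{2-(k+1)n},
\]
which equals $2^{1-\nume{G^{+k}}}$ because $\nume{G^{+k}}=\nume{G}+kn=(k+1)n-1$. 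Setting $G=T$ recovers Theorem~\ref{thm:main}. The only substantive ingredient in this argument is Lemma~\ref{lm:main}; the main care required is ensuring that the exponents balance, which happens precisely thanks to the identity $\numv{G}=\nume{G}+1$, so the hardest conceptual work has already been done before this theorem.
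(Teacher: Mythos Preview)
Your proof is correct and follows essentially the same route as the paper: both apply the Sidorenko bound~\eqref{eq:Sidorenko0}, then reduce to the inequality $m(G^{+k},W)\ge m(T_k,W)^{n-1}/m(S_k,W)^{n-2}$ (your Jensen step with the weighted measure $d\mu$ is exactly the paper's H\"older step~\eqref{eq:Holder} rephrased), and finish via Lemma~\ref{lm:main}. The only cosmetic difference is in the last numeric estimate: the paper factors the ratio as $m(T_k,W)\cdot\bigl(m(T_k,W)/m(S_k,W)\bigr)^{n-2}$ and invokes the commonness of $T_k$, whereas you factor it as $m(S_k,W)\cdot\bigl(m(T_k,W)/m(S_k,W)\bigr)^{n-1}$ and use the (more elementary) convexity bound $m(S_k,W)\ge 2^{1-k}$.
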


\begin{proof}
Fix an $n$-vertex graph $G$ with $n-1$ edges that has the Sidorenko property,
an integer $k\in\{1,\ldots,5\}$, and a graphon $W$.
We need to show that
\begin{equation}
m(G^{+k},W)\ge 2^{1-\nume{G^{+k}}}=2^{1-k\cdot\numv{G}-\nume{G}}=2^{2-(k+1)n}.
\label{eq:goal}
\end{equation}

Since the graph $G$ has the Sidorenko property,
we derive from \eqref{eq:Sidorenko0} that
\begin{equation}
m(G^{+k},W)\ge\int\limits_{D_k}d^W_k(z)^{n}\delta^W_k(z)^{n-1}\dd z.
\label{eq:Sidorenko}
\end{equation}
H\"older's Inequality imply that
\begin{equation}
\int\limits_{D_k}d^W_k(z)^2\delta^W_k(z)\dd z\le \left(\;\int\limits_{D_k}d^W_k(z)\dd z\right)^{\frac{n-2}{n-1}}
                                 \left(\;\int\limits_{D_k}d^W_k(z)^{n}\delta^W_k(z)^{n-1}\right)^{\frac{1}{n-1}}.
\label{eq:Holder}
\end{equation}
We next combine \eqref{eq:Sidorenko} and \eqref{eq:Holder} to obtain that
\begin{equation}
m(G^{+k},W) \ge \frac{\left(\int\limits_{D_k}d^W_k(z)^2\delta^W_k(z)\dd z\right)^{n-1}}{\left(\int\limits_{D_k}d^W_k(z)\dd z\right)^{n-2}}
            = \frac{m(T_k,W)^{n-1}}{m(S_k,W)^{n-2}} 
\label{eq:almost}
\end{equation}
Since the graph $T_k$ is common~\cite{GrzLLV22} (this also follows from \eqref{eq:Tk} as mentioned earlier),
it holds that $m(T_k,W)\ge 2^{-2k}$.
We now use Lemma~\ref{lm:main} and derive from \eqref{eq:almost} that
\[
m(G^{+k},W) \ge m(T_k,W)\left(\frac{m(T_k,W)}{m(S_k,W)}\right)^{n-2}
            \ge 2^{-2k}\cdot 2^{-(n-2)(k+1)}=2^{2-(k+1)n},
\]
which is the estimate \eqref{eq:goal} required to be established.
\end{proof}

\section{Concluding remarks}
\label{sec:concl}

We would like to conclude with two brief remarks on our proof.
We find it interesting that Theorem~\ref{thm:main} follows from a more general Theorem~\ref{thm:main2},
which does not require the graph $G$ to be connected.
We speculate that the assumption on connectivity of a graph $G$ in Conjecture~\ref{conj:main}
can be replaced with a lower bound on the number of edges of $G$.
Theorem~\ref{thm:main2} also implies for $k\in\{1,2,3,4,5\}$ that
the $k$-apex of any Sidorenko graph is an induced subgraph of a common graph.
In particular,
the $1$-apex of any Sidorenko graph with a suitable number pendant vertices adjacent to the apex vertex is common,
however, when a sufficiently large number of such pendant vertices is added,
the graph is not common by the result of Fox~\cite{Fox08} mentioned in Introduction.

One can wonder whether the statement of Lemma~\ref{lm:main} can hold for any $k$ (note that
if the statement of the lemma holds for $k$,
then the statement of Theorem~\ref{thm:main2} holds for the same value of $k$).
Unfortunately, this is not true as shown by the following example.
For $\varepsilon\in [0,1)$ consider the graphon $W_\varepsilon$ constructed as follows.
Split the interval $[0,1]$ into an interval $A$ of length $\varepsilon$ and
three intervals $B$, $C$ and $D$, each of length $\frac{1-\varepsilon}{3}$.
The graphon $W_\varepsilon$ is equal to one on the sets $(A\cup B)\setminus A^2$,
$(A\cup C)\setminus A^2$ and $(A\cup D)\setminus A^2$, and equal to zero elsewhere.
The construction is illustrated in Figure~\ref{fig:Weps}.

\begin{figure}
\begin{center}
\epsfbox{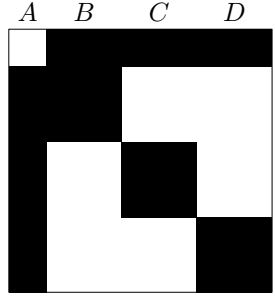}
\end{center}
\caption{The graphon $W_{\varepsilon}$ for $\varepsilon=1/7$.
         The origin of the coordinate system is in the top left corner,
	 the black color represents the value~$1$, and
	 the white color represents the value~$0$.}
\label{fig:Weps}
\end{figure}

A direct computation yields the following:
\begin{align*}
t(T_k,W_\varepsilon) & = 2\varepsilon(1-\varepsilon)\left(\frac{1-\varepsilon}{3}\right)^k + 3 \left(\frac{1-\varepsilon}{3}\right)^2\left(\frac{1+2\varepsilon}{3}\right)^k\\
t(T_k,1-W_\varepsilon) & = \varepsilon^{k+2} + 6 \left(\frac{1-\varepsilon}{3}\right)^{k+2}\\
t(S_k,W_\varepsilon) & = \varepsilon(1-\varepsilon)^k+(1-\varepsilon)\left(\frac{1+2\varepsilon}{3}\right)^k\\
t(S_k,1-W_\varepsilon) & = \varepsilon^{k+1}+(1-\varepsilon)\left(\frac{2-2\varepsilon}{3}\right)^k
\end{align*}
Observe that if $\varepsilon<2/5$, then
\[\lim_{k\to\infty}\frac{1}{k}\log\frac{m(T_k,W_\varepsilon)}{m(S_k,W_\varepsilon)}=
  \log\frac{\max\left\{\frac{1-\varepsilon}{3},\frac{1+2\varepsilon}{3},\varepsilon,\frac{1-\varepsilon}{3}\right\}}{\max\left\{1-\varepsilon,\frac{1+2\varepsilon}{3},\varepsilon,\frac{2-2\varepsilon}{3}\right\}}=
  \log\frac{\frac{1+2\varepsilon}{3}}{1-\varepsilon}=
  \log\frac{1+2\varepsilon}{3(1-\varepsilon)}.\]
In particular, the limit is equal to $\log\frac{10}{21}$ for $\varepsilon=1/8$,
however,
if Lemma~\ref{lm:main} were true for any value of $k$, this limit would be at least $\log\frac{1}{2}$.
In fact, this example shows that 
the statement Lemma~\ref{lm:main} fails already for $k=12$ as
$m(T_{12},W_{1/20})\approx 2.4849\cdot 10^{-6}$, $m(S_{12},W_{1/20})\approx 0.030980$ and
so \[\frac{m(T_{12},W_{1/20})}{m(S_{12},W_{1/20})}\approx 0.00008021\] while $2^{-13}\approx 0.0001221$.

\section*{Acknowledgement}

The authors would like to thank Andrzej Grzesik and Jan Volec
for fruitful discussions concerning Ramsey multiplicity of graphs including the results presented in this paper.

\bibliographystyle{bibstyle}
\bibliography{treeapex}

\end{document}